\documentclass[10pt]{amsart}

\usepackage[T1]{fontenc}
\usepackage{lmodern}
\usepackage{microtype}
\usepackage{amsmath,amssymb,mathtools}
\usepackage[margin=1.05in]{geometry}
\usepackage[colorlinks=true,linkcolor=blue,citecolor=blue,urlcolor=blue]{hyperref}
\hypersetup{
  pdftitle={Automorphisms of a $24$-line configuration on the Schur quartic},
  pdfauthor={Gerald H\"ohn}
}

\newcommand{\Aut}{\operatorname{Aut}}
\newcommand{\Stab}{\operatorname{Stab}}
\newcommand{\NS}{\operatorname{NS}}
\newcommand{\PP}{\mathbf P}
\newcommand{\cC}{\mathcal C}
\newtheorem{theorem}{Theorem}
\theoremstyle{remark}
\newtheorem{remark}{Remark}

\title[Automorphisms of the Schur configuration]
{Automorphisms of a $(24_4, 32_3)$-configuration on the Schur quartic}
\author{Gerald H\"ohn}
\address{Department of Mathematics, Kansas State University, Manhattan, Kansas 66506, USA}
\date{July 10, 2026}

\begin{document}
\begin{abstract}
We answer the automorphism question raised by Naskr\k{e}cki and Pokora for their 
$(24_4, 32_3)$-configuration on the Schur quartic.  
The stabilizer of either $24$-line half is isomorphic to $W(D_4)\rtimes C_3$, where
$C_3$ acts by even triality; the full projective automorphism group has order~$1152$. The $D_4$ model also gives
an intrinsic coloring and shortens the incidence check.
\end{abstract}

\maketitle

Let
\[
 X=\{x_0^4-x_0x_1^3-x_2^4+x_2x_3^3=0\}\subset\PP^3,
\]
let $H$ be the hyperplane class, and let $D$ be the divisor formed by the
$24$ lines of \cite{NP26}.  Put
\[
 \alpha=\operatorname{diag}(1,1,i,i),\qquad D^*=\alpha(D),\qquad
 \cC_2=D+D^*.
\]
Thus $D\sim6H$, $\cC_2\sim12H$, and $\cC_2$ is the divisor of the $48$ lines
of the second kind \cite[Propositions~4.1 and 5.1]{NP26}.  Degtyarev identifies
the Schur quartic with the singular $K3$ surface $X([8,4,8])$
\cite[Table~1]{Deg19}; hence $\rho(X)=20$ and
\[
 T_X\cong
 \begin{pmatrix}8&4\\4&8\end{pmatrix}\cong A_2(4),
 \qquad
 \NS(X)\cong U\oplus E_8(-1)^{\oplus2}\oplus A_2(-4).
\]
The second isometry follows from unimodularity of the $K3$ lattice and
Nikulin's uniqueness theorem \cite[Theorem~1.14.2]{Nik80}.

\begin{theorem}
The abstract group $\Aut(X)$ is infinite, whereas
\[
 G:=\Aut(X,H)=\Aut_{\PP^3}(X)
   \cong T_{192}\rtimes C_6,
 \qquad |G|=1152.
\]
The pair $(X,G)$ is case $77a$ of \cite[\S6.7]{BH21}, with GAP identifier
$[1152,157515]$.  Moreover,
\[
 \Stab_{\Aut(X)}(\cC_2)=G
\]
and
\[
 G_D:=\Stab_{\Aut(X)}(D)=\Stab_{\Aut(X)}(D^*)
 \cong (T_{24}*T_{24})\rtimes C_2
 \cong W(D_4)\rtimes C_3,
 \qquad |G_D|=576.
\]
Here $T_{24}$ is the binary tetrahedral group, $*$ denotes central product,
and $C_3$ acts by even triality.  The action on the two halves gives
\[
 1\longrightarrow G_D\longrightarrow G\longrightarrow C_2\longrightarrow1.
\]
\end{theorem}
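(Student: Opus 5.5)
We will establish the statement in four stages: infinitude of $\Aut(X)$, the projective group $G$, the stabilizers of $\cC_2$ and $D$, and the exact sequence.

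\emph{Infinitude of $\Aut(X)$.} Since $\rho(X)=20$, $X$ is a singular $K3$ surface. By Shioda--Inose and the theorem of Piatetski-Shapiro--Shafarevich, its automorphism group is infinite: a $K3$ surface with $\rho\ge 3$ has finite $\Aut$ only for the finitely many lattices classified by Nikulin and Vinberg. The lattice $\NS(X)\cong U\oplus E_8(-1)^{2}\oplus A_2(-4)$ is not on Vinberg's list for rank $20$; that list contains only the discriminants $3$ and $4$. Alternatively, one can exhibit an elliptic fibration, coming from the $U$ summand, with Mordell--Weil rank $>0$.

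\emph{The projective group $G$.} Every automorphism of $X$ fixing $H$ is induced by $PGL_4$, so $\Aut(X,H)=\Aut_{\PP^3}(X)$. We then write down explicit generators:
\begin{itemize}
\item the diagonal scalings $x_1\mapsto\omega x_1$ and $x_3\mapsto \omega x_3$;
\item the order-$4$ scalings preserving $x_0^4-x_0x_1^3$ together with compatible ones on $(x_2,x_3)$;
\item the swap $(x_0,x_1)\leftrightarrow(x_2,x_3)$ composed with $\alpha$;
\item the non-diagonal symmetries of the binary quartic $x_0^4-x_0x_1^3$, which has tetrahedral symmetry with stabilizer $T_{24}$ acting on $\mathbb C^2$, and similarly for $x_2^4-x_2x_3^3$.
\end{itemize}
This yields $(T_{24}*T_{24})$-type subgroups of $SL_2\times SL_2$ acting on the two binary factors. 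Modulo scalars, these generate a group of order $1152$. For the upper bound we invoke the classification of \cite{BH21}: finite symplectic-by-cyclic groups acting on $K3$ surfaces. Alternatively, we compute $G$ as the stabilizer of $H$ in $O(\NS(X))$ that preserves the ample cone and acts on $T_X$ by $\pm$, $\mu_6$, using Torelli. Matching the order $1152$, the symplectic part $T_{192}$ of index $6$, and $T_X=A_2(4)$ pins down case $77a$ and the GAP identifier, which we check by computer.

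\emph{Stabilizers.} For $\Stab(\cC_2)=G$, note that any $g\in\Aut(X)$ preserving $\cC_2$ fixes its class $12H$, hence fixes $H$ because $\NS$ is torsion-free; so $g\in G$. Conversely, $G$ permutes the lines of $X$, and $\cC_2$ is characterized intrinsically as the $48$ lines of the second kind in \cite{NP26}, so $G$ preserves it. For $D$, any $g$ stabilizing $D$ fixes $6H$, hence $H$, so $G_D\subset G$. Next we show that $G$ permutes $\{D,D^*\}$: the $24$ lines of $D$ form a single connected incidence class (the $(24_4,32_3)$ structure), and $D$ and $D^*$ are the two components of the intersection graph restricted to $\cC_2$, or are otherwise distinguished by an orbit computation. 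This yields a homomorphism $G\to C_2$. The swap element built from $\alpha$ maps $D$ to $D^*$, so the map is surjective and $G_D$ has index $2$, giving $|G_D|=576$. The same argument shows $\Stab(D)=\Stab(D^*)$.

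\emph{Structure of $G_D$.} The subgroup of $G$ preserving each binary factor is $(T_{24}*T_{24})\rtimes C_2$, where $C_2$ comes from a diagonal element normalizing both factors. Via $SL_2\times SL_2\to SO_4$, $T_{24}*T_{24}$ maps onto the index-$3$ subgroup $W(D_4)^+$-type group. We identify $T_{24}*T_{24}$ modulo its center, with the extra $C_2$, as the $D_4$ Weyl group extended by triality: the order-$3$ element of $T_{24}/\{\pm1\}\cong A_4$ on one side acts as triality, while the lattice $D_4$ is realized as the Hurwitz quaternions. Concretely, we exhibit $W(D_4)$ as a normal subgroup of index $3$ and a complement $C_3$ acting by an even (i.e.\ inner-to-$S_3$ rotation) triality automorphism.

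\emph{Main obstacle.} We expect the hardest step to be proving that $G$ is no larger than $1152$ and that $G$ preserves the partition $\{D,D^*\}$. For these we would rely on the \cite{BH21} classification together with an explicit intersection-matrix computation on the $48$ lines.
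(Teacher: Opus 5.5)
The main gap is your identification of $G_D$ inside $G$. You take $G_D$ to be the subgroup preserving each binary factor, i.e.\ the block-diagonal elements, with the extra $C_2$ a diagonal element. That subgroup does have index $2$ in $G$, but it is the wrong one. It contains $\alpha=\operatorname{diag}(1,1,i,i)$, which sends $D$ to $D^*$ by definition, and it misses the swap $s$, which fixes $D$. Your own step, in which an element built from $\alpha$ exchanges the halves, already contradicts this identification.

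It is not even abstractly isomorphic to $W(D_4)\rtimes C_3$. Indeed $\alpha$ commutes with every block-diagonal matrix and has order $4$ in $\operatorname{PGL}_4$, so that subgroup has a central $C_4$. By contrast, $W(D_4)\rtimes C_3$ acts absolutely irreducibly on $\mathbb R^4$ and has center $\{\pm1\}$.

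What is missing is the explicit check on the lines, which is the heart of the paper's proof. Take $g_\gamma,g_\delta$ generating a twisted copy of $T_{24}$ on the first block. Note $\det\gamma=\omega$: the $\operatorname{SL}_2$-lift of $A_4$ preserves $x_0^4-x_0x_1^3$ only up to a cubic character, so your ``stabilizer $T_{24}$'' needs this twist. Then verify, by substituting into the parametrizations of \cite{NP26}, that $g_\gamma,g_\delta,s$ fix $D$ while $q$ maps $D$ to $D^*$. It follows that $\langle g_\gamma,g_\delta,s\rangle\cong (T_{24}*T_{24})\rtimes\langle s\rangle$, of order $576$, equals $G_D$, because $G_D\subseteq G$ and $q\notin G_D$.

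So the $C_2$ is the swap interchanging the two $T_{24}$ factors; in the Hurwitz model it is quaternion conjugation. Only with this $C_2$ does your quaternionic route give $W(D_4)\rtimes C_3$. The counting there also needs repair. Since $|T_{24}*T_{24}|=288=3\,|W(D_4)^+|$, the group $T_{24}*T_{24}$ is $W(D_4)^+\rtimes C_3$ and contains $W(D_4)^+$ with index $3$; it does not map onto it. Its center maps to $-1\in O(4)$, so no further quotient by the center should be taken.

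There are two smaller points. First, $D$ and $D^*$ are \emph{not} the components of the intersection graph on $\cC_2$: lines of $D$ do meet lines of $D^*$, each meeting six of them. What separates the halves is the triple-point incidence graph, in which no triple point mixes the halves and each half is connected \cite[Propositions~3.5 and~4.1]{NP26*}. With that graph, your argument that $G$ permutes $\{D,D^*\}$ goes through. Your use of the intrinsic ``second kind'' property to get $G\subseteq\Stab_{\Aut(X)}(\cC_2)$ is a fine substitute for checking generators.

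Second, Vinberg's surfaces of discriminant $3$ and $4$ have \emph{infinite} automorphism groups, and no $K3$ surface with $\rho=20$ has finite $\Aut$. Citing Shioda--Inose, or \cite{BH21} as the paper does, suffices for infinitude, but your sentence about Vinberg's list is wrong.
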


\begin{proof}
The infinitude of $\Aut(X)$ and the abstract structure of $G$ follow from
\cite[\S1 and \S6.7]{BH21}; that classification uses, in particular, the
Leech-lattice data of H\"ohn--Mason \cite{HM16}.  Nukui's explicit isomorphism
with the Schur model \cite[Proposition~3.6 and \S4]{Nuk26} gives the following
generators
of $G$.  Write $\tau^2=-3$,
$\omega=(-1+\tau)/2$, and split the coordinates as
$(x_0,x_1)\mid(x_2,x_3)$.  Then
\[
 g_\gamma=\begin{pmatrix}\gamma&0\\0&I_2\end{pmatrix},\quad
 g_\delta=\begin{pmatrix}\delta&0\\0&I_2\end{pmatrix},\quad
 s=\begin{pmatrix}0&I_2\\I_2&0\end{pmatrix},\quad
 q=\begin{pmatrix}iI_2&0\\0&I_2\end{pmatrix},
\]
where
\[
 \gamma=\begin{pmatrix}1&0\\0&\omega\end{pmatrix},\qquad
 \delta=\frac{\tau}{3}\begin{pmatrix}1&-1\\-2&-1\end{pmatrix}.
\]
Substitution in the line parametrizations of \cite[Appendix~A]{NP26} gives
\[
 g_\gamma(D)=g_\delta(D)=s(D)=D,\qquad q(D)=D^*.
\]
The first two matrices generate a copy of $T_{24}$; conjugation by $s$ gives
a second copy, their centers are identified in $\operatorname{PGL}_4$, and
$s$ interchanges the factors.  Hence
$\langle g_\gamma,\, g_\delta,\, s\rangle\cong(T_{24}*T_{24})\rtimes C_2$ has
order $576$ and is precisely $G_D$.

If $u\in\Aut(X)$ stabilizes $D$, then
$6\,u^*H=u^*D=D=6H$; since $\NS(X)$ is torsion free, $u^*H=H$.
The same argument with $\cC_2\sim12H$ excludes additional nonprojective
stabilizers of $\cC_2$.  Conversely, the displayed generators preserve
$\cC_2$, proving the stabilizer assertions.

For the second description of $G_D$, label the roots
$\Phi(D_4)=\{\pm e_a\pm e_b:1\leq a<b\leq4\}$ by the lines as follows:
\[
\renewcommand{\arraystretch}{1.10}
\begin{array}{c|cccc}
12&L_2:e_1+e_2&L_{15}:e_1-e_2&L_{16}:-e_1+e_2&L_1:-e_1-e_2\\
13&L_6:e_1+e_3&L_{11}:e_1-e_3&L_{20}:-e_1+e_3&L_3:-e_1-e_3\\
14&L_7:e_1+e_4&L_8:e_1-e_4&L_{23}:-e_1+e_4&L_{24}:-e_1-e_4\\
23&L_9:e_2+e_3&L_5:e_2-e_3&L_4:-e_2+e_3&L_{22}:-e_2-e_3\\
24&L_{12}:e_2+e_4&L_{10}:e_2-e_4&L_{21}:-e_2+e_4&L_{19}:-e_2-e_4\\
34&L_{13}:e_3+e_4&L_{14}:e_3-e_4&L_{17}:-e_3+e_4&L_{18}:-e_3-e_4.
\end{array}
\]
The $32$ triples of \cite[Table~3]{NP26} are exactly the unordered triples
$\{r_1,\,r_2,\,r_3\}$ with $r_1+r_2+r_3=0$.  For distinct roots $r$, $r'$, the
number of common neighbors in this triple geometry is $0$, $1$, $4$, $3$ according as
$(r,r')=-2$, $-1$, $0$, $1$.  Thus the incidence structure recovers the root inner
product and
\[
 \Aut_{\mathrm{inc}}(D)=\Aut\Phi(D_4)
   =W(D_4)\rtimes S_3\cong W(F_4).
\]
The permutations induced by $g_\gamma$, $g_\delta$ and~$s$ form its index-two
subgroup $W(D_4)\rtimes C_3$, the even-triality subgroup.
\end{proof}

\begin{remark}[Intrinsic coloring and two shortcuts]
In the triple-incidence graph of the $48$ second-kind lines, no triple point
mixes the two halves and each half is connected
\cite[Propositions~3.5 and~4.1]{NP26}.  Hence the two connected components are
$D$, $D^*$; this gives the intrinsic coloring requested in
\cite[Section~8]{NP26}, uniquely up to interchange.  Moreover, $G_D$ has four
orbits on unordered pairs of lines, of sizes $12$, $72$, $96$, $96$, corresponding to
root inner products $-2$, $0$, $1$, $-1$, and is transitive on the $32$ zero-sum
triples.  Thus, after checking the three generator actions, four determinants
replace the $\binom{24}{2}=276$ pair tests in
\cite[Proposition~3.5]{NP26}, and one concurrency check handles all $32$
triples.
\end{remark}

\bibliographystyle{amsalpha}

\providecommand{\bysame}{\leavevmode\hbox to3em{\hrulefill}\thinspace}
\providecommand{\MR}{\relax\ifhmode\unskip\space\fi MR }
\providecommand{\MRhref}[2]{%
  \href{http://www.ams.org/mathscinet-getitem?mr=#1}{#2}
}
\providecommand{\href}[2]{#2}

\end{document}